\DeclarePairedDelimiterX\setc[2]{\{}{\}}{\,#1 \;\delimsize\vert\; #2\,}
\newcommand{\R}{\mathbb{R}}
\newcommand{\N}{\mathbb{N}}
\newcommand{\B}{\mathbb{B}}
\renewcommand{\S}{\mathbb{S}}
\newcommand{\Sm}{\mathbb{S}^m}
\newcommand{\Sn}{\mathbb{S}^n}
\newcommand{\blk}{\overline{\lambda}_k}
\newcommand{\blo}{\overline{\lambda}_1}
\newcommand{\bla}{\overline{\lambda}_2}
\renewcommand{\l}{\lambda}
\newtheorem{defi}{Definition}[section] 
\newtheorem{thm}[defi]{Theorem}
\newtheorem{rem}[defi]{Remark}
\newtheorem{prop}[defi]{Proposition}
\newtheorem{lemma}[defi]{Lemma}
\newtheorem{cor}[defi]{Corollary}
\title[Conformal bounds for the second eigenvalue of the Laplacian]{Upper bounds for the second nonzero eigenvalue of the Laplacian via folding and conformal volume}
\author{{Mehdi} {Eddaoudi}}\email{mehdi.eddaoudi.1@ulaval.ca}\address{{D\'epartement de math\'ematiques et de statistique}, {Pavillon Alexandre-Vachon}, {Universit\'e Laval}, {Qu\'ebec QC}, {G1V 0A6}, {Canada}}
\author{{Alexandre} {Girouard}}\email{alexandre.girouard@mat.ulaval.ca}\address{{D\'epartement de math\'ematiques et de statistique}, {Pavillon Alexandre-Vachon}, {Universit\'e Laval}, {Qu\'ebec QC}, {G1V 0A6}, {Canada}}
\begin{document}

\begin{abstract}
We prove an upper bound for the volume-normalized second nonzero eigenvalue of the Laplace operator on closed Riemannian manifold, in terms of the conformal volume. This bound provides effective upper bound for a large class of manifolds, thereby generalizing several known results.
\end{abstract}

\maketitle

\section{\bf Introduction}
Let $(M,g)$ be a closed Riemannian manifold of dimension $m\geq 2$. The spectrum of the Laplace operator $\Delta_g$ is discrete, non-negative and unbounded:
$$0=\lambda_0(M,g) < \lambda_1(M,g) \leq \lambda_2(M,g) \leq \cdots\nearrow\infty,$$
where each eigenvalue is repeated according to its multiplicity. Let 
$\mathcal{M}(M)$ be the set of all Riemannian metric $g$ on $M$ such that $\text{vol}(M,g)=1$. One of the central theme of spectral geometry is to bound the individual eigenvalue functionals $\lambda_k:\mathcal{M}(M)\to\R$. By homogeneity this problem is equivalent to that of bounding the scale-invariant functionals
$$\blk(M,g):=\lambda_k(M,g)\text{vol}(M,g)^{2/m}$$
among all Riemannian metrics $g$ on $M$.

\subsubsection*{\bf Surfaces}
For surfaces, this problem goes back to Hersch~\cite{Hersch}, who proved that the canonical round metric $g_{\S^2}$ uniquely maximizes $\lambda_1(\S^2,g)$ among all metrics $g$ that have area equal to $4\pi$. Equivalently, $\blo(\S^2,g)\leq 8\pi$, with equality if and only if $g$ is isometric to $kg_{\S^2}$, for some constant $k>0$. For closed orientable surfaces $M$ of genus $\gamma$, upper-bounds were obtained by Yang and Yau~\cite{YangYau} and then slightly improved by El Soufi and Ilias~\cite{ElSoufiIlias1984}:
\begin{gather}\label{ineq:YangYau}
    \blo(M,g)\leq8\pi\left\lfloor\frac{\gamma+3}{2}\right\rfloor.
\end{gather}
See~\cite{Ros2022, Ros2023, KarpukhinVinokurov2022} for recent improvements.
This bound is attained on the sphere: this is Hersch result. In their recent work~\cite{NayataniShoda2019}, Nayatani and Shoda showed that it is also attained on orientable surfaces of genus $\gamma=2$. However Karpukhin~\cite{Karpukhin2019} proved that inequality~\eqref{ineq:YangYau} is strict for all other genera $\gamma$. Sharp bounds are also known for the torus and the Klein bottle~\cite{Nadirashvili1996, CiKaMe2019} as well as the projective plane~\cite{Li-Yau}. 

For arbitrary $k\geq 1$, Karpukhin, Nadirashvili, Penskoi and Polterovich~\cite{KNPP2} proved that 
\begin{gather}\label{ineq:KNPPSurfaces}
    \blk(M,g)\leq8\pi k\left\lfloor\frac{\gamma+3}{2}\right\rfloor.
\end{gather}
On the sphere $\S^2$, the same authors~\cite{KNPP2021} proved that inequality~\eqref{ineq:KNPPSurfaces} is sharp for all $k$, and strict for $k\geq 2$.
Sharp upper bounds are also known for the projective plane~\cite{NadirashviliPenskoi2018, Karpukhin2021}:
$$\blk(\mathbb{RP}^2,g)\leq 4\pi(2k+1),$$
with strict inequality if $k\geq 2$.


\subsection{\bf The conformal volume of Li and Yau}
The situation is completely different for manifolds $M$ of dimension $m\geq 3$. Indeed in this case the functional $\lambda_1:\mathcal{M}(M)\to\R$ is not bounded above. This was proved in full generality by Colbois and Dodziuk~\cite{colbois-Dodziuk}, following work of Bleecker~\cite{Bleecker1983} and others. However, by further restricting $\lambda_1$ to a conformal class $C=[g_0]$, El Soufi and Ilias~\cite{ElSoufiIlias1986} were able to prove that
$$\lambda_1:\mathcal{M}(M)\cap C\to\R$$
is bounded above, in terms of the \emph{conformal volume} $V_c(M,C)$. This conformal invariant was introduced by Li and Yau in their influential paper~\cite{Li-Yau}. Let us briefly recall its definition. Given a conformal immersion $\phi:M\to\S^n\subset\R^{n+1}$, let
$$V_c(n,\phi)=\sup_{\tau\in\text{Aut}(\S^n)}\text{vol}(M,(\tau\circ\phi)^{\star}g_{\S^n}),$$
and define the \emph{$n$-conformal volume} of $(M,C)$ to be
$$V_c(n,M,C)=\inf_{\phi:M\to\S^n}V_c(m,\phi),$$
where the infimum is over all conformal immersions. Observe that $V_c(n,M,C)$ is well-defined as soon as $n$ is large enough, since a combination of the Nash embedding theorem with a stereographic projection provides a conformal embedding of $(M,C)$ in a sphere. The \emph{conformal volume of $(M,C)$} is 
$$V_c(M,C)=\lim_{n\to\infty}V_c(n,M,C).$$
See~\cite{Li-Yau,ElSoufiIlias1984} for properties of the conformal volume.

Li and Yau~\cite{Li-Yau} proved upper bounds for $\lambda_1$ on surfaces in term of the conformal volume, thereby improving inequality~\eqref{ineq:YangYau}. Soon after, this was generalized to manifolds of arbitrary dimensions by El Soufi and Ilias~\cite{ElSoufiIlias1986}, who proved the following fundamental theorem.
\begin{thm}\label{thm:ElSoufiIlias}
Let $M$ be a closed $m$-dimensional manifold. For each conformal class $C$ on $M$ and each Riemannian metric $g\in C$ that admits a conformal immersion $\phi:(M,C)\to\S^n\subset\R^{n+1}$,
\begin{gather}\label{ineq:ElSoufiIlias}
    \blo(M,g)\leq mV_c(n,M,C)^{2/m},
\end{gather}
with equality if and only if there exists a minimal immersion $\phi:M\to\S^n$ such that
\begin{gather}\label{eq:eigenphi}
 -\Delta_g\phi=\lambda_1(M,g)\phi,
\end{gather}
and $\phi^{\star}g_{\S^n}=kg$ for some constant $k>0$. 
\end{thm}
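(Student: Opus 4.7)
The plan is to use a Hersch-type test-function argument with the components of a suitably ``centered'' conformal immersion, and then control the Dirichlet energy via the conformal factor and Hölder's inequality.

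First I would fix a conformal immersion $\phi:(M,C)\to\S^n\subset\R^{n+1}$ and seek a Möbius transformation $\tau\in\mathrm{Aut}(\S^n)$ such that the centered map $\psi=\tau\circ\phi=(\psi_1,\dots,\psi_{n+1})$ satisfies the balancing condition $\int_M \psi_i\, dv_g=0$ for every $i$. The existence of such $\tau$ is the classical Hersch trick in dimension $n$: one extends the conformal group of $\S^n$ to the group of conformal automorphisms of the closed unit ball $\overline{\mathbb B}^{n+1}$, considers the center-of-mass map $\tau\mapsto \frac{1}{\mathrm{vol}(M,g)}\int_M (\tau\circ\phi)\,dv_g\in\overline{\mathbb B}^{n+1}$, and uses a Brouwer-degree / boundary-behavior argument to produce a fixed point (equivalently, a zero) in the interior. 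This is the step I expect to be the main technical obstacle, since one must verify that the center-of-mass map extends continuously to the boundary with the correct degree despite the singular behavior of Möbius transformations concentrating at a point.

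Once the centered immersion $\psi$ is in hand, each component $\psi_i$ is admissible in the variational characterization of $\lambda_1(M,g)$, giving
\begin{equation*}
\lambda_1(M,g)\sum_{i=1}^{n+1}\int_M \psi_i^2\,dv_g\leq \sum_{i=1}^{n+1}\int_M |\nabla\psi_i|^2\,dv_g.
\end{equation*}
Because $\psi$ takes values in $\S^n$, the left-hand sum equals $\mathrm{vol}(M,g)$. For the right-hand side I would use the conformality of $\psi$: writing $\psi^\star g_{\S^n}=e^{2f}g$, one has the pointwise identity $|d\psi|_g^2=me^{2f}$, while $dv_{\psi^\star g_{\S^n}}=e^{mf}dv_g$. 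Hölder's inequality with exponents $m/2$ and $m/(m-2)$ then yields
\begin{equation*}
\int_M e^{2f}\,dv_g\leq \Bigl(\int_M e^{mf}\,dv_g\Bigr)^{2/m}\mathrm{vol}(M,g)^{(m-2)/m}=\mathrm{vol}(M,\psi^\star g_{\S^n})^{2/m}\mathrm{vol}(M,g)^{(m-2)/m}.
\end{equation*}
Combining these estimates and dividing by $\mathrm{vol}(M,g)^{(m-2)/m}$ produces $\overline\lambda_1(M,g)\leq m\,\mathrm{vol}(M,\psi^\star g_{\S^n})^{2/m}\leq m\,V_c(n,\phi)^{2/m}$, and taking the infimum over $\phi$ gives inequality~\eqref{ineq:ElSoufiIlias}.

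For the equality case I would trace back when each inequality is saturated. Saturation of Hölder forces $e^{2f}$ to be constant, i.e. $\psi^\star g_{\S^n}=kg$ for some $k>0$. Saturation in the Rayleigh quotient forces each component $\psi_i$ to be an eigenfunction of $\Delta_g$ for the eigenvalue $\lambda_1(M,g)$, giving~\eqref{eq:eigenphi}. Finally, Takahashi's theorem applied to the eigenmap $\psi$ with constant conformal factor implies that $\psi:M\to\S^n$ is a minimal immersion, completing the characterization. Conversely, if such a minimal eigenmap exists, reversing the computation shows all the intermediate inequalities are equalities, so the bound is attained.
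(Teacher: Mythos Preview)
The paper does not actually prove Theorem~\ref{thm:ElSoufiIlias}; it is quoted as the fundamental result of El~Soufi and Ilias~\cite{ElSoufiIlias1986} and used as background and as a benchmark for the new $\lambda_2$ bound. So there is no ``paper's own proof'' to compare against in the strict sense.

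That said, your argument is the standard (and correct) Li--Yau / El~Soufi--Ilias proof, and it is entirely consistent with the toolkit the paper assembles in Section~2: the Hersch center-of-mass step is exactly Lemma~\ref{lemmeHersch}; the pointwise identity you use for $|d\psi|_g^2$ and the conversion of the Dirichlet energy into the pulled-back volume are the same computations the paper carries out (with the formula $\phi^{\star}g_{\S^n}=\frac{1}{m}\sum_i|\nabla X_{e_i}\circ\phi|^2\,g$) inside the proof of Theorem~\ref{thm:thmprincipal}. Your equality analysis via H\"older saturation and Takahashi's theorem is also the classical one. One small remark: the H\"older step with exponents $m/2$ and $m/(m-2)$ is literally written for $m\geq 3$; for $m=2$ the inequality is a tautology since $e^{2f}=e^{mf}$, so the argument still goes through, but it is worth flagging.
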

Not only does Theorem~\ref{thm:ElSoufiIlias} provide an interesting upper bound, but the equality case is one of the main tool available for the explicit computation of the conformal volume of various manifolds. In particular, it is known that irreducible homogeneous manifolds admits minimal immersions by first eigenfunctions. So do all strongly harmonic manifolds. For more on this, see~\cite{ElSoufiIlias1984,Besse1978}.

For arbitrary index $k$, Korevaar~\cite{korevaar1993upper} was then able to prove that each functional
$\lambda_k:\mathcal{M}\cap C\to\R$
is bounded above, however without providing effective bounds. One should also see the work of Grigor'yan, Netrusov and Yau~\cite{GNY2004} where the method of Korevaar is revisited and developed in the general setting of Dirichlet energy forms, providing a flexible tool that is applicable in various contexts. To be slightly more precise, the results of~\cite{korevaar1993upper,GNY2004} imply the existence of a constant $K(C)$ depending on the conformal class $C$ on $M$ such that each $g\in C$ satisfies
\begin{gather}\label{ineq:Korevaar}
    \blk(M,g)\leq K(C) k^{2/m}
\end{gather}
for each index $k\in\N$. When we say that the result of Korevaar is not effective, we mean that the value of the constant $K(C)$ is not known, or that it is not expressed in terms of well understood quantities. 
This situation was recently improved by Kokarev~\cite{Kokarev}. He proved that one can take $K(C)=C(m,n)\mathcal{V}_c(M,n)$ in inequality~\eqref{ineq:Korevaar}, as long as $n$ is large enough for $(M,C)$ to admit a conformal immersion $\phi:(M,C)\to\S^n\subset\R^{n+1}$:
\begin{gather}\label{ineq:Kokarev}
\blk(M,g) \leq C(m,n)V_c(n,M,C)k^{2/m}.    
\end{gather}
It follows from Theorem~\ref{thm:ElSoufiIlias} that for $k=1$, one can take $C(m,n)=m$.
We will see shortly that the main result of the present paper shows that for $k=2$, one can use 
$$C(m,n)=2^\frac{2}{m}m.$$
One should also see~\cite{Hassannezhad2011} for related results.

\subsection{\bf The main result and its consequences}
Our main result provides an effective bound for $\lambda_2:\mathcal{M}\cap C\to\R$ that is similar the above inequality~\eqref{ineq:ElSoufiIlias}, in terms of the dimension and of the conformal volume of $(M,C)$. 
\begin{thm}\label{thm:thmprincipal}
    Let $M$ be closed manifold of dimension $m$. For each conformal class $C$ on $M$ that admits a conformal immersion $\phi:(M,C)\to\S^n\subset\R^{n+1}$, the following inequality holds for each $g\in C$,
    \begin{gather}\label{ineq:thmprincipal}
        \overline{\lambda}_2(M,g)<2^\frac{2}{m}m V_c(n,M,C)^{2/m}.
    \end{gather}
\end{thm}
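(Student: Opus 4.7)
I adapt the proof strategy of Theorem \ref{thm:ElSoufiIlias}, constructing two disjointly supported families of coordinate-type test functions, one on each of two subsets of $M$ of volume $V/2$, where $V=\text{vol}(M,g)$. The factor $2^{2/m}$ in the bound arises from this halving in the subsequent H\"older step. Starting from a conformal immersion $\phi:M\to\Sn\subset\R^{n+1}$, the first and main step is to establish the existence of a partition $M=\Omega_1\sqcup\Omega_2$ (up to a null set) with $\text{vol}(\Omega_i,g)=V/2$, together with two M\"obius transformations $\tau_1,\tau_2\in\text{Aut}(\Sn)$ satisfying
\[
\int_{\Omega_i}\tau_i\circ\phi\,dv_g=0\in\R^{n+1},\qquad i=1,2.
\]
The partition should come from a cap-splitting: for each $p\in\Sn$ one finds the unique radius $r(p)$ with $\text{vol}(\phi^{-1}(B_{r(p)}(p)),g)=V/2$, providing a one-parameter family of candidate bisections. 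The M\"obius transformations come from Hersch-type center-of-mass arguments applied separately to the restrictions $\phi|_{\Omega_i}$. The principal difficulty is that all these conditions must hold \emph{simultaneously}: the $\tau_i$ that Hersch-centers $\phi|_{\Omega_i}$ depends on the partition, which is itself not canonical, so one needs a coupled topological/degree argument on the joint parameter space of caps in $\Sn$ and M\"obius transformations parametrized by $\B^{n+1}$. This is the main obstacle.

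\textbf{Test functions and Rayleigh estimates.} For each $i\in\{1,2\}$ and $j\in\{1,\ldots,n+1\}$, set $\psi_i^j:=(\tau_i\circ\phi)^j\cdot\mathbf{1}_{\Omega_i}$; these functions have mean zero on $M$, and the families $(\psi_1^j)_j$ and $(\psi_2^j)_j$ are mutually disjointly supported. Writing $(\tau_i\circ\phi)^{\star}g_{\Sn}=\rho_i^2 g$ on $\Omega_i$, the pointwise identities $\sum_j|\nabla\psi_i^j|^2=m\rho_i^2$ and $\sum_j(\psi_i^j)^2=1$ on $\Omega_i$, combined with H\"older's inequality, yield
\[
\sum_j\int_{\Omega_i}|\nabla\psi_i^j|^2\,dv_g=m\int_{\Omega_i}\rho_i^2\,dv_g\leq m\left(\frac{V}{2}\right)^{1-2/m}\left(\int_{\Omega_i}\rho_i^m\,dv_g\right)^{2/m},
\]
while $\sum_j\int_{\Omega_i}(\psi_i^j)^2\,dv_g=V/2$. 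Since $\int_{\Omega_i}\rho_i^m\,dv_g\leq\int_M\rho_i^m\,dv_g\leq V_c(n,\phi)$, the standard pigeonhole argument (from the El Soufi--Ilias proof) produces a linear combination $u_i:=\sum_j a_i^j\psi_i^j$ whose Rayleigh quotient is at most $2^{2/m}\,m\,V^{-2/m}\,V_c(n,\phi)^{2/m}$.

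\textbf{Conclusion and strictness.} Since $u_1,u_2$ are disjointly supported with vanishing mean on $M$, the subspace $\text{span}\{1,u_1,u_2\}\subset H^1(M)$ is three-dimensional and its maximal Rayleigh quotient equals $\max_i R(u_i)$. The min--max principle then gives $\l_2(M,g)\leq 2^{2/m}\,m\,V^{-2/m}\,V_c(n,\phi)^{2/m}$, and taking the infimum over conformal immersions $\phi:(M,C)\to\Sn$ replaces $V_c(n,\phi)$ by $V_c(n,M,C)$, giving the volume-normalized bound. The strict inequality reflects the fact that $\Omega_i$ is a proper subset of $M$ with positive-measure complement and the conformal factor $\rho_i$ is strictly positive, so $\int_{\Omega_i}\rho_i^m\,dv_g<\int_M\rho_i^m\,dv_g$ strictly; combined with the strictness of H\"older's inequality unless $\rho_i$ is constant (which is incompatible with having a balanced half-volume partition by a conformal immersion), this gap should propagate through the argument to upgrade the final estimate from $\leq$ to $<$.
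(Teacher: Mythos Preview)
Your approach has a fatal gap, and it is not the one you identify. The test functions $\psi_i^j=(\tau_i\circ\phi)^j\cdot\mathbf{1}_{\Omega_i}$ are \emph{not} in $H^1(M)$: since $(\tau_i\circ\phi)^j$ has no reason to vanish on $\partial\Omega_i$, the cutoff creates a jump discontinuity, and the distributional gradient of $\psi_i^j$ acquires a singular part supported on $\partial\Omega_i$. The quantity you write, $\sum_j\int_{\Omega_i}|\nabla\psi_i^j|^2\,dv_g=m\int_{\Omega_i}\rho_i^2\,dv_g$, is only the interior contribution; the true $H^1$-energy is infinite, so the min--max principle cannot be applied to $\mathrm{span}\{1,u_1,u_2\}$. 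Smoothing the indicator does not help: a transition layer of width $\epsilon$ contributes $O(\epsilon^{-1})$ to the Dirichlet integral. Note also that what you call the ``principal difficulty'' is in fact absent in your setup: any equal-volume bisection by a cap works, and Hersch's lemma can be applied \emph{independently} on each half to produce $\tau_1,\tau_2$; no coupled degree argument is needed for that.

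The paper circumvents the $H^1$ obstruction by \emph{folding} rather than cutting. One reflects $\Sn\setminus C$ conformally onto the cap $C$ via $\tau_C$, and uses the globally defined Lipschitz map $f_C=\phi_{\xi_C}\circ F_C\circ\phi$. Since $F_C$ equals the identity on $\partial C$, this map is continuous and lies in $H^1(M)$; the factor $2$ then appears through the energy identity coming from the conformal reflection. The price is that the test functions are no longer disjointly supported, so your two-piece trick for building a $3$-dimensional test space is unavailable. Instead one must arrange, for a \emph{single} family $(f_{C,i})_i$, orthogonality to both constants and the first eigenfunction $f_1$. This is where a genuine topological argument enters: the paper uses a degree argument of Petrides (Lemma~\ref{lemmePetrides}) on the space of caps to locate $C$ with $\int_M f_C\,f_1\,dv_g=0$. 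In short, the real obstacle and its resolution are both displaced from where your plan puts them.
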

The main interest of Theorem~\ref{thm:thmprincipal} is that it provides new effective upper bounds for a large class of manifolds of arbitrary dimension. For instance, when considering surfaces, Theorem~\ref{thm:thmprincipal} is already a known result in Karpukhin and Stern~\cite{karpukhinStern2023}[proposition 1.20].

Furthermore, whenever there exists a branched cover  of degree $d$ from a surface $M$ into the sphere $\S^2$, we have for all $n$,

\[
    4\pi \leq V_c(n,M,C) \leq d V_c(n,\S^2)= 4\pi d.
\]
More properties of the conformal volume can be found in \cite{ElSoufiIlias1984, Kokarev, Li-Yau}.
As this degree $d$ can be bounded from above by $\left\lfloor\frac{\gamma+3}{2}\right\rfloor$, we obtain the inequality \eqref{ineq:KNPPSurfaces} for $k=2$.

\begin{cor}
Let $(M,g)$ be a closed orientable Riemannian surface of genus $\gamma$. Then
    $$\overline{\lambda}_2(M,g)< 16\pi\left\lfloor\frac{\gamma+3}{2}\right\rfloor.$$
\end{cor}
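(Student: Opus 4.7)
The plan is to combine Theorem~\ref{thm:thmprincipal} specialized to $m=2$ with the existence of a low-degree branched cover of the sphere, as already outlined in the paragraph immediately preceding the corollary.

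First, I would invoke the classical result of Yang and Yau, based on Brill--Noether theory, that every closed orientable Riemann surface of genus $\gamma$ admits a non-constant holomorphic (hence conformal) map $\phi:M\to\S^2$ of degree at most $d=\lfloor(\gamma+3)/2\rfloor$. Any Riemannian metric $g$ on $M$ determines a unique complex structure compatible with its conformal class $C$, so $\phi$ can be regarded as a conformal immersion $(M,C)\to\S^2\subset\R^3$ (away from the isolated branch points, which do not affect integrated quantities).

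Second, I would bound the conformal volume in terms of $d$. Since $\tau\circ\phi$ remains a degree-$d$ branched cover for every $\tau\in\mathrm{Aut}(\S^2)$, one has
\[
\mathrm{vol}\bigl(M,(\tau\circ\phi)^{\star}g_{\S^2}\bigr)=d\cdot\mathrm{vol}(\S^2,g_{\S^2})=4\pi d.
\]
Taking the supremum over $\tau$ and the infimum over all such immersions, and noting that $V_c(n,M,C)$ is non-increasing in $n$, yields $V_c(n,M,C)\leq 4\pi d\leq 4\pi\lfloor(\gamma+3)/2\rfloor$ for all $n\geq 2$.

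Finally, I would apply Theorem~\ref{thm:thmprincipal} with $m=2$, which reads $\overline{\lambda}_2(M,g)<2^{2/2}\cdot 2\cdot V_c(n,M,C)^{2/2}=4\,V_c(n,M,C)$, and combine with the previous bound:
\[
\overline{\lambda}_2(M,g)<4\cdot V_c(n,M,C)\leq 16\pi\left\lfloor\frac{\gamma+3}{2}\right\rfloor.
\]
There is no real obstacle here: the entire argument is an assembly of standard ingredients, the only non-trivial input being the Yang--Yau degree estimate, which is classical. The strictness of the final inequality is inherited directly from the strict inequality in Theorem~\ref{thm:thmprincipal}.
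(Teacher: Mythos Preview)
Your proposal is correct and follows essentially the same route as the paper: bound the conformal volume by $4\pi d$ via the branched cover of degree $d\leq\lfloor(\gamma+3)/2\rfloor$, then plug into Theorem~\ref{thm:thmprincipal} with $m=2$. The only cosmetic difference is that the paper phrases the volume bound as $V_c(n,M,C)\leq d\,V_c(n,\S^2)=4\pi d$, whereas you compute the pullback volume directly; both are standard.
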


For closed surfaces of $\R^n$, thanks to an inequality obtained by Li and Yau \cite{Li-Yau} between the conformal volume $V_c(n,M,C)$ and the norm of the mean curvature vector $H$ of the immersion,
$$V_c(n,M,C) \leq \int_M \|H\|^2dv_g,$$
we obtain the following corollary which resembles a sort of Reilly inequality for $\bla$.

\begin{cor}
    Let $M$ be a closed surface in $\mathbb{R}^n$, and let $g$ be the induced metric from $\R^n$, then

      $$\bla < 4\int_M \|H\|^2dv_g.$$
      
\end{cor}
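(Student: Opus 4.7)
The plan is to apply Theorem~\ref{thm:thmprincipal} directly to the conformal class of the induced metric, and then invoke the Li--Yau inequality quoted immediately above the statement.

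First I would verify that the hypothesis of Theorem~\ref{thm:thmprincipal} is satisfied in this setting. Since $M$ is a closed surface of $\R^n$, composing the inclusion $M\hookrightarrow\R^n$ with the inverse of a stereographic projection $\R^n\to\Sn\setminus\{p\}$, which is a conformal diffeomorphism, produces a conformal immersion of $(M,[g])$ into $\Sn$. Hence Theorem~\ref{thm:thmprincipal} applies with conformal class $C=[g]$.

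Specializing Theorem~\ref{thm:thmprincipal} to $m=2$, the exponent $2/m$ becomes $1$ and the prefactor $2^{2/m}m$ becomes $4$, so the theorem yields
\begin{gather*}
\bla(M,g) < 4\, V_c(n,M,[g]).
\end{gather*}
Combining this with the Li--Yau inequality $V_c(n,M,[g]) \leq \int_M \|H\|^2 dv_g$ stated just above the corollary yields the claim.

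The corollary is therefore an immediate arithmetic consequence of Theorem~\ref{thm:thmprincipal} together with the classical Li--Yau bound relating conformal volume and Willmore energy; there is no genuine obstacle. The only point deserving mild care is the conformal-immersion hypothesis, and this is handled by the conformality of stereographic projection, which shows that any submanifold of $\R^n$ automatically gives rise to a conformal immersion into $\Sn$.
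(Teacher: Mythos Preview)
Your proposal is correct and matches the paper's approach exactly: the corollary is obtained by specializing Theorem~\ref{thm:thmprincipal} to $m=2$ and combining it with the Li--Yau inequality $V_c(n,M,C)\leq\int_M\|H\|^2\,dv_g$. Your extra remark about stereographic projection supplying the required conformal immersion is a welcome clarification that the paper leaves implicit.
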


Any manifold $(M,g)$ for which the equality case in Theorem~\ref{thm:ElSoufiIlias} is realized also lend itself to an interesting eigenvalue comparison.
\begin{cor}\label{Coro:lambda2homo}
    Let $(M,g_0)$ be an irreducible homogeneous space of dimension $m$. For each $g\in[g_0]$,
    \begin{gather}\label{ineq:lambda2homo}
        \overline{\lambda}_2(M,g)<2^\frac{2}{m}\blo(M,g_0).
    \end{gather}
\end{cor}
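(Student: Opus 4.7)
The plan is to combine Theorem~\ref{thm:thmprincipal} with the equality case of the El Soufi--Ilias theorem (Theorem~\ref{thm:ElSoufiIlias}), exploiting the fact (recalled in the paragraph following Theorem~\ref{thm:ElSoufiIlias}) that every irreducible homogeneous manifold admits a minimal isometric immersion into some sphere by first eigenfunctions of the Laplacian.

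More concretely, I would proceed as follows. First, invoke the structural fact about $(M,g_0)$: there exists $n\in\N$ and a minimal immersion $\phi: M \to \S^n$ whose components are first eigenfunctions of $\Delta_{g_0}$ and such that $\phi^{\star}g_{\S^n}=k g_0$ for some constant $k>0$. This $\phi$ is in particular a conformal immersion of $(M,[g_0])$ into $\S^n$, so $V_c(n,M,[g_0])$ is defined, and the equality clause of Theorem~\ref{thm:ElSoufiIlias} applies:
\begin{equation*}
\blo(M,g_0) = m\, V_c(n,M,[g_0])^{2/m}.
\end{equation*}
Next, for any $g\in[g_0]$, apply Theorem~\ref{thm:thmprincipal} to the conformal class $C=[g_0]$ with the same value of $n$ to obtain
\begin{equation*}
\bla(M,g) < 2^{2/m}\, m\, V_c(n,M,[g_0])^{2/m}.
\end{equation*}
Substituting the equality above into the right-hand side yields exactly the claimed bound $\bla(M,g) < 2^{2/m}\blo(M,g_0)$.

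There is essentially no obstacle to overcome beyond citing the right ingredients in the right order: the nontrivial content is entirely absorbed into Theorem~\ref{thm:thmprincipal} (which is the main theorem of the paper) and into the classical fact that the standard immersion of an irreducible homogeneous space by its first eigenfunctions saturates the El Soufi--Ilias inequality. The only thing to be careful about is using the same integer $n$ in both inequalities, so that the comparison of conformal volumes is tautological rather than requiring a limit as $n\to\infty$.
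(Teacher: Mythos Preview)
Your proposal is correct and follows essentially the same route as the paper: invoke the Takahashi-type result that an irreducible compact homogeneous space admits a minimal immersion into some $\S^n$ by first eigenfunctions, use the equality case of Theorem~\ref{thm:ElSoufiIlias} to write $\blo(M,g_0)=mV_c(n,M,[g_0])^{2/m}$, and substitute into~\eqref{ineq:thmprincipal}. The paper additionally pins down $n+1$ as the multiplicity of $\lambda_1(M,g_0)$, but this specificity is not needed for the argument.
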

Because it is instructive and very short, we give the proof here.
\begin{proof}[Proof of Corollary~\ref{Coro:lambda2homo}]
A well-known result by Takahashi~\cite{takahashi1966} states that any irreducible compact homogeneous space $(M,g_0)$, admits a minimal immersion $\phi:(M,g_0)\to\S^n$ by its first eigenfunctions. This means that the equality case in Theorem~\ref{thm:ElSoufiIlias} is realized:
$$\blo(M,g_0)=mV_c(n,M,C)^{2/m},$$
where $n+1$ is the multiplicity of $\lambda_1(M,g_0)$. 
The result is then obtained by substitution into~\eqref{ineq:thmprincipal}.
\end{proof}

\subsection{\bf Explicit upper bounds for \texorpdfstring{$\bla$}{TEXT} of various spaces}
Through Corollary~\ref{Coro:lambda2homo}, Theorem~\ref{thm:thmprincipal} leads to explicit upper bounds for the second volume-normalized eigenvalue $\bla$ of any compact manifold $(M,g)$ that admits a minimal immersion $\phi:M\to\Sn$ by first eigenfunctions. In order to present some of them, we now recall some basic facts which will set the notation. 
For the sake of simplicity, whenever a manifold $M$ has a known canonimal metric $g_0$, we will remove it from the notation and write $\lambda_k(M)=\lambda_k(M,g_0)$. For details on homogeneous representation and volumes, see the book~\cite{Besse1978} or the shorter paper~\cite{BoSuTi2003}.
The results of this section are summarized in Table~\ref{tab:upperboundhomo}, in which the last two columns are the upper bounds provided by Theorem~\ref{thm:ElSoufiIlias} and Corollary~\ref{Coro:lambda2homo}.

\begin{table}
\small
\begin{center}

\begin{tabular}{llll}

\toprule

Manifold&Dimension&Theorem~\ref{thm:ElSoufiIlias}&Theorem~\ref{thm:thmprincipal} \\

\midrule

$\bm{(M,g_0)}$& $\bm{m=}$\bf dim$\bm{(M)}$& $\bm{\blo\leq}$ &$\bm{\bla<}$ \\[.15cm]

$\S^m$  &$m$&$m\omega_m^{2/m}$&$m(2\omega_m)^{2/m}$  \\[.1cm]

$\mathbb{P}^m(\mathbb{R})$&$m$&$2(m+1)(\frac{\omega_m}{2})^{2/m}$&$2(m+1)\omega_m^{2/m}$ \\[.1cm]

$\mathbb{P}^d(\mathbb{C})$&$m=2d$&$\frac{4\pi(d+1)}{d!^{1/d}}$&$\frac{2^{1/d}4\pi(d+1)}{d!^{1/d}}$ \\[.1cm]

$\mathbb{P}^d(\mathbb{H})$ &$m=4d$&  $\frac{8\pi(d+1)}{(2d+1)!^{1/2d}}$
& $\frac{2^{1/2d}8\pi (d+1))}{(2d+1)!^{1/2d}}$ \\[.1cm]

$\mathbb{P}^2(\mathbb{O})$&$m=16$&$48\pi(\frac{6}{11!})^{1/8}$& $48 \pi(\frac{12}{11!})^{1/8}$\\[.1cm]

$\mathbb{S}^p\left(\sqrt{\frac{p}{m}}\right)\times \mathbb{S}^q\left(\sqrt{\frac{q}{m}}\right)$&$m=p+q$ & $(p^p q^q)^{1/p+q}(\omega_p\omega_q)^{2/p+q}$  &  $(p^p q^q)^{1/p+q}(2\omega_p\omega_q)^{2/p+q}$\\[.1cm]

 $\mathbb{T}^2_{\text{eq}}$ & $m=2$ & $8 \pi^2 \sqrt{3}/3$ & $16 \pi^2 \sqrt{3}/3$\\[.1cm]

 $\mathbb{K}$  & $m=2$ & $12 \pi E( \frac{2\sqrt{2}}{3})$&  $24 \pi E( \frac{2\sqrt{2}}{3})$\\
\bottomrule

\end{tabular}

\end{center}

\vspace*{6pt}
 
\caption{\label{tab:upperboundhomo} Upper bounds for $\blo$ and $\bla$ on some homogeneous spaces.}

\end{table}

%
   
%

\subsubsection*{Spheres and Real Projective Spaces}

The standard round sphere $\Sm \subset \R^{m+1}$ has volume $\omega_m = \frac{2 \pi ^\frac{m+1}{2}}{\Gamma(\frac{m+1}{2})}$, where $\Gamma$ is the gamma function. The first non-zero eigenvalue is $\lambda_1(\Sm) = m$. The next distinct eigenvalue is $2(m+1)$.

The real projective space of dimension $m$, $\mathbb{RP}^m$, is the quotient of $\Sm$ by the antipodal map. Its volume is $\omega_m/2$, and it follows by symmetry that $\lambda_1(\mathbb{RP}^m) = 2(m+1)$.

By applying Theorem \ref{thm:thmprincipal} to the sphere $\Sm$ and the projective space $\mathbb{RP}^m$, we recover the recent results of Hanna Kim~\cite{Kim,Kimprojectivespace}.
\begin{cor}\label{Kimresultsphere}
Let $\omega_m$ be the volume of the standard round sphere $\S^n \subset \R^{m+1}$. Let $C_{\Sm}$ be the corresponding conformal class. For each $g \in C_{\S^n}$,
\begin{gather}\label{ineq:KimSphere}
    \bla(\Sm, g) < 2^{\frac{2}{m}} \blo(\Sm) = m(2\omega_m)^{2/m}.
\end{gather}
\end{cor}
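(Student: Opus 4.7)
The corollary is a direct specialization of Theorem~\ref{thm:thmprincipal} to the round sphere (equivalently, of Corollary~\ref{Coro:lambda2homo}, since $\S^m$ is an irreducible homogeneous space). The plan is therefore a routine computation in two steps.

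First I would compute the conformal volume $V_c(m, \S^m, C_{\S^m})$. The identity map $\text{id}: \S^m \to \S^m \subset \R^{m+1}$ is trivially a conformal immersion, and for every M\"obius automorphism $\tau \in \text{Aut}(\S^m)$ the pulled-back metric $\tau^{\star} g_{\S^m}$ is the image of $g_{\S^m}$ under a diffeomorphism and hence has total volume $\omega_m$. This yields $V_c(m, \text{id}) = \omega_m$, and therefore $V_c(m, \S^m, C_{\S^m}) \leq \omega_m$. The matching lower bound follows from the equality case of Theorem~\ref{thm:ElSoufiIlias} combined with the standard values $\lambda_1(\S^m) = m$ and $\text{vol}(\S^m, g_{\S^m}) = \omega_m$, which give $\blo(\S^m) = m\,\omega_m^{2/m}$. (Alternatively, one applies Takahashi's theorem to the minimal embedding of $\S^m$ by its first spherical harmonics, exactly as in the proof of Corollary~\ref{Coro:lambda2homo}.) Consequently $V_c(m, \S^m, C_{\S^m}) = \omega_m$.

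Second I would substitute this value into inequality~\eqref{ineq:thmprincipal}, applied with $n=m$ and $C = C_{\S^m}$, which produces
$$\bla(\S^m, g) \;<\; 2^{2/m}\, m\, V_c(m, \S^m, C_{\S^m})^{2/m} \;=\; 2^{2/m}\, m\, \omega_m^{2/m} \;=\; m(2\omega_m)^{2/m}.$$
Recognizing the right-hand side as $2^{2/m}\blo(\S^m)$ finishes the argument.

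There is no genuine obstacle here: all the analytic work resides in Theorem~\ref{thm:thmprincipal}, and this corollary amounts to observing that the canonical conformal invariant of the round sphere is simply its volume. The only mildly non-trivial point is the identification $V_c(m, \S^m, C_{\S^m}) = \omega_m$, and this is standard material going back to Li and Yau.
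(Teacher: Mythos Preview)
Your proposal is correct and follows essentially the same route as the paper: the corollary is obtained as a direct specialization of Theorem~\ref{thm:thmprincipal} (equivalently, of Corollary~\ref{Coro:lambda2homo}) to $\S^m$, using that the round sphere realizes the equality case of Theorem~\ref{thm:ElSoufiIlias} so that $V_c(m,\S^m,C_{\S^m})=\omega_m$. The paper does not spell out the computation of the conformal volume, but your two-line verification via the identity immersion and the El Soufi--Ilias equality case is exactly the standard argument implicit in the text.
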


\begin{cor}
Let $C_{\mathbb{P}^m(\mathbb{R})}$ be the canonical conformal structure on the projective space $\mathbb{P}^m(\mathbb{R})$. For each $g \in C_{\mathbb{P}^m(\mathbb{R})}$,
\begin{gather}\label{ineq:KimProjective}
    \bla(\mathbb{P}^m(\mathbb{R}), g) < 2^{\frac{2}{m}} \blo(\mathbb{P}^m(\mathbb{R})) = (2m+2) \omega_m^\frac{2}{m}.
\end{gather}
\end{cor}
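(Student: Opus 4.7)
The plan is to derive this corollary as a direct specialization of Corollary~\ref{Coro:lambda2homo} applied to $\mathbb{P}^m(\mathbb{R})$ equipped with its canonical round metric $g_0$. To do so I first need to check that $(\mathbb{P}^m(\mathbb{R}), g_0)$ is an irreducible compact homogeneous space. This is classical: $\mathbb{P}^m(\mathbb{R}) = \S^m/\{\pm\mathrm{Id}\}$ is a rank-one compact symmetric space, hence irreducible. Corollary~\ref{Coro:lambda2homo} therefore yields, for each $g \in [g_0] = C_{\mathbb{P}^m(\mathbb{R})}$,
$$\bla(\mathbb{P}^m(\mathbb{R}), g) < 2^{2/m}\,\blo(\mathbb{P}^m(\mathbb{R})).$$

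The next step is to compute $\blo(\mathbb{P}^m(\mathbb{R}))$ explicitly. The canonical volume is $\mathrm{vol}(\mathbb{P}^m(\mathbb{R}), g_0) = \omega_m/2$, since $g_0$ is the pushforward of $g_{\S^m}$ under the antipodal quotient. For the first eigenvalue I invoke the standard parity argument: eigenfunctions on $\mathbb{P}^m(\mathbb{R})$ correspond to antipodally invariant eigenfunctions on $\S^m$. The coordinate functions, which realize $\lambda_1(\S^m)=m$, are odd under $x \mapsto -x$ and so do not descend; the next eigenspace of $\S^m$, spanned by restrictions of degree-$2$ harmonic polynomials, is even and gives $\lambda_1(\mathbb{P}^m(\mathbb{R})) = 2(m+1)$. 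Combining these two facts,
$$\blo(\mathbb{P}^m(\mathbb{R})) = 2(m+1)\,(\omega_m/2)^{2/m}.$$

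Finally, I substitute and simplify:
$$2^{2/m}\,\blo(\mathbb{P}^m(\mathbb{R})) = 2(m+1)\,\bigl(2\cdot \omega_m/2\bigr)^{2/m} = (2m+2)\,\omega_m^{2/m},$$
which is exactly the stated equality, and the strict inequality is inherited from Corollary~\ref{Coro:lambda2homo}. Since the corollary we rely on already encapsulates the hard work (Theorem~\ref{thm:thmprincipal} together with Takahashi's theorem providing the minimal immersion by first eigenfunctions), there is no genuine obstacle here; the only step requiring attention is the explicit identification $\lambda_1(\mathbb{P}^m(\mathbb{R})) = 2(m+1)$ via the parity argument, and the matching volume computation $\mathrm{vol}(\mathbb{P}^m(\mathbb{R}), g_0)=\omega_m/2$.
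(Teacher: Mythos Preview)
Your proposal is correct and matches the paper's approach: the paper also derives this corollary from Corollary~\ref{Coro:lambda2homo} by noting that $\mathbb{P}^m(\mathbb{R})$ is a rank-one compact symmetric (hence irreducible homogeneous) space, and uses the same explicit values $\lambda_1(\mathbb{P}^m(\mathbb{R}))=2(m+1)$ and $\mathrm{vol}(\mathbb{P}^m(\mathbb{R}),g_0)=\omega_m/2$ to obtain $\blo(\mathbb{P}^m(\mathbb{R}))=2(m+1)(\omega_m/2)^{2/m}$.
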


On projective space, Kim aimed originally for a stronger inequality and so pursues a more explicit construction than in our paper. To complete her approach, the missing ingredient seems to be some new kind of ``folded conformal volume'', as is highlighted in Section 7 of her paper. As a first step in that direction, Kim also investigates limiting cases of the folded volume when either the fold map or the M\"{o}bius transformation degenerates. 
\subsubsection*{Complex and Quaternionic Projective Spaces}

The complex projective space of dimension $d$ is $\mathbb{P}^d(\mathbb{C}) \cong \S^{2d+1}/\S^1$. Its canonical metric is induced by this homogeneous representation. It is known as the Fubini-Study metric. Its volume is $\pi^d/d!$ and its first non-zero eigenvalue is $\lambda_1(\mathbb{P}^d(\mathbb{C})) = 4(d+1)$.

The quaternionic projective space of dimension $d$ is $\mathbb{P}^d(\mathbb{H}) \cong \S^{4d+3}/\S^3$. Its canonical metric has a volume of $\pi^{2d}/(2d+1)!$ and its first non-zero eigenvalue is $\lambda_1(\mathbb{P}^d(\mathbb{H})) = 8(d+1)$.

\subsubsection*{The Octonionic Projective Plane}

The octonionic projective plane of dimension 16 is $\mathbb{P}^2(\mathbb{O}) \cong \S^{23}/\S^7$. The volume of its canonical metric is $6 \pi^8/11!$, and its first non-zero eigenvalue is $\lambda_1(\mathbb{P}^2(\mathbb{O})) = 48$.

\subsubsection*{The 2-dimensional equilateral Torus}

The 2-dimensional equilateral torus \( \mathbb{T}^2_{\text{eq}} \) is a flat torus induced by the metric generated by the lattice \( \mathbb{Z}(1, 0) \oplus \mathbb{Z} \left(\frac{1}{2}, \frac{\sqrt{3}}{2}\right) \). Nadirashvili showed in \cite{Nadirashvili1996} that this is the unique metric that maximizes $\blo$, a result previously known by Berger \cite{Berger1973} within the class of flat tori. The volume of this torus is $\frac{\sqrt{3}}{2}$, and its first non-zero eigenvalue is $\lambda_1(\mathbb{T}^2_{eq}) = \frac{16 \pi^2}{3}$. Moreover, El Soufi and Ilias proved in \cite{ElSoufiIlias2000} that, together with the flat square torus $\mathbb{T}^2_{\text{square}} = \mathbb{R}^2 / \mathbb{Z}^2$, these are the only two metrics on a genus-one surface that admit a (minimal) isometric immersion via the first eigenfunctions into a sphere: $\S^3$
for the square torus and $\S^5$
for the equilateral torus.

\subsubsection*{The generalized Clifford Torus}

The generalized Clifford torus is
$\mathbb{S}^p\left(\sqrt{\frac{p}{m}}\right)\times \mathbb{S}^q\left(\sqrt{\frac{q}{m}}\right)$, where $p+q=m$, endowed with the canonical product metric. Its first eigenvalue is $\l_1\left(\mathbb{S}^p\left(\sqrt{\frac{p}{m}}\right)\times \mathbb{S}^q\left(\sqrt{\frac{q}{m}}\right) \right)=m $, and its volume is $(\frac{p^p q^q}{m^m})^{\frac{1}{2}}\omega_p\omega_q$.

\subsubsection*{The Klein Bottle}

Another illustrative example of a non-orientable manifold is the Klein bottle $\mathbb{K}$.
Let $g_0$ be the metric of revolution
$$g_0 = \frac{9 + (1+8 \cos^2 v)^2}{1+8 \cos^2 v} (du^2 + \frac{dv^2}{1+8 \cos^2 v}),$$
where $0 \leq u < \frac{\pi}{2}$ and $0 \leq v < \pi$.

It was proven by Jakobson, Nadirashvili, and Polterovich in \cite{jakobson2006extremal} that this metric admits a minimal isometric immersion into $\mathbb{S}^4$ via the first eigenfunctions, and hence $V_c(4,\mathbb{K}) = \text{Area}(\mathbb{K}, g_0) = 6\pi E(\frac{2\sqrt{2}}{3})$, where $E(\cdot)$ is a complete elliptic integral of the second kind.

They also showed that $(\mathbb{K}, g_0)$ is a bipolar surface of the Lawson torus $\tau_{3,1}$, and that $g_0$ is extremal for the functional defined on the space of Riemannian metrics of given area $g \to \lambda_1(\mathbb{K}, g)\text{Area}(\mathbb{K}, g)$. This provides the following upper bound:
$$\lambda_1(\mathbb{K}, g)\text{Area}(\mathbb{K}, g) \leq \lambda_1(\mathbb{K}, g_0)\text{Area}(\mathbb{K}, g_0) = 12 \pi E(\frac{2\sqrt{2}}{3}).$$
The question of the uniqueness of the maximal metric was later resolved by El Soufi, Giacomini, and Jazar \cite{ElSou-Giaco-Jazar}, who showed that $g_0$ is indeed the unique metric achieving equality.

\medskip

Each of these examples, endowed with their canonical metrics, achieves equality in Theorem \ref{thm:ElSoufiIlias}, providing an explicit formula for their conformal volume. Indeed, $\Sm$, $\mathbb{P}^m(\mathbb{R})$, $\mathbb{P}^2(\mathbb{O})$, $\mathbb{P}^d(\mathbb{C})$, and $\mathbb{P}^d(\mathbb{H})$ are all symmetric spaces of rank 1, and so they are also compact, irreducible, homogeneous spaces.

The generalized Clifford torus $\mathbb{S}^p\left(\sqrt{\frac{p}{m}}\right) \times \mathbb{S}^q\left(\sqrt{\frac{q}{m}}\right)$ admits a minimal immersion into $\S^{2m-1}$ by first eigenfunctions, see the paper by Park and Urakawa \cite{park1991classification}. 

The equilateral torus $\mathbb{T}^2_{\text{eq}}$ of dimension 2 is a result of Nadirashvili \cite{Nadirashvili1996}, and finally, the Klein bottle $\mathbb{K}$ is a result of Jakobson, Nadirashvili, and Polterovich \cite{jakobson2006extremal}, as previously mentioned.

\subsubsection*{Neumann problem}
The proof of Theorem~\ref{thm:thmprincipal} also applies to compact manifolds with boundary, leading in particular to upper bounds for the second non-zero Neumann eigenvalue $\mu_2$ of bounded Euclidean domains $\Omega\in\R^m$:
$$\mu_2(\Omega)\leq 2^{2/m}(m\omega_m)^{2/m}.$$
However this bound is not sharp, and it should be compared to that of Bucur and Henrot \cite{BucurHenrot}. This follows from the construction, which maps $\Omega$ to a sphere in an artificial way for this setting.

\subsection{Plan of the paper}
The proof of Theorem~\ref{thm:thmprincipal} is based on the construction of an admissible two-dimensional space of trial functions, which is obtained by combining the construction of the conformal volume $V_c(M,C)$ of Li and Yau~\cite{Li-Yau} with the folding mechanism that was introduced by the second author, Nadirashvili and Polterovich~\cite{GNP} in their study of the second eigenvalue $\lambda_2$ of spheres.

\section{\bf An upper bound on the second conformal eigenvalue}
Let $(f_j)_{j\geq 0}$ be an orthonormal basis of $L^2(M,g)$ corresponding to the eigenvalues $\lambda_j(M,g)$. We use one of the standard variational characterizations of $\lambda_k(M,g)$:
\begin{gather}\label{caravariation}
\lambda_k(M,g)=\min_{f \in A_k\setminus\{0\}}\frac{\int_M|\nabla f|^2\,dv_g}{\int_M f^2\,dv_g},
\end{gather}
where $A_k$ is the following subspace of the Sobolev space $H^1(M)$: 
$$A_k = \setc*{ f \in  H^1(M)} {\int_M ff_j dv_g=0 \text{ for }j=0,1,\cdots,k-1}.$$ 
Functions in $A_k$ are said to be \emph{admissible} and they are used as trial functions in~\eqref{caravariation} to provide upper bounds on $\lambda_k(M,g)$.

In order to prove Theorem~\ref{thm:thmprincipal}, trial functions for $\lambda_2$ will be constructed by combining the definition of the conformal volume of Li and Yau~\cite{Li-Yau} with the folding mechanism that was introduced in~\cite{GNP}. We start by recalling some well-known ideas from Hersch~\cite{Hersch}.

\subsection{\bf Center of mass arguments}
For each $s\in\R^{n+1}$ we define $X_s:\S^n\to\R$ by
$$X_s(p)=\langle s,p\rangle.$$
These are the coordinate functions on the sphere $\S^n$, and they form the eigenspace corresponding to $\lambda_1(\S^n,g_0)=n$.
Any conformal automorphism $\tau$ of $\S^n$ can be decomposed as
$$\tau=T\circ\phi_\xi,$$
where $T\in O(n+1)$ and for each $\xi\in\mathbb{B}^{n+1}\subset\R^{n+1}$, 
$\phi_\xi:\Sn\to\Sn$ is defined by
\begin{equation}\label{defiphi_xi}
    \phi_{\xi}(x)=\xi +\frac{1- |\xi|^2}{|x+\xi|^2}(x+\xi).
\end{equation}
These automorphisms $\phi_\xi$ are particularly nice and have been used extensively. Notice that $\phi_0=\text{id}_{\Sn}$ and for $\xi_0\in\S^n$
\begin{gather}\label{eq:limitxi}
    \lim_{\xi\to \xi_0}\phi_{\xi}(p)=\xi_0\qquad\forall p\neq -\xi_0.
\end{gather}
In other words, for $\|\xi\|$ near 1, the automorphism $\phi_\xi$ pushes most of the sphere to a small neighborhood of $\xi/\|\xi\|$.

Motivated by the work of Szeg\H{o}~\cite{Szego}, Hersch used this rich structure in~\cite{Hersch} to construct a trial function for $\l_1(\mathbb{S}^2,g)$.
One of the key ideas of his proof was to renormalize the center of mass of some measures on $\S^2$. We state a recent version by Laugesen~\cite[Corollary 5]{Laugesencenterofmass}. 
\begin{lemma}[Hersch's center of mass]\label{lemmeHersch}
Let $\mu$ be a Borel measure on the unit sphere $\Sn$, with $n \geq 1$, satisfying $0 < \mu(\Sn) < \infty.$ 
If for all $y \in \Sn,$
\begin{gather}\label{ineq:halfmass}
    \mu(\{y\}) <\frac{1}{2}\mu(\Sn)
\end{gather}
then a unique point $\xi=\xi(\mu) \in \mathbb{B}^{n+1}$ exists such that:
\begin{equation}\label{mesurenonatomique}
    \int_{\Sn}\phi_{\xi}\,d\mu=0.
\end{equation}
This point $\xi(\mu)$ depends continuously on the measure $\mu$. That is, if $\mu$ satisfies~\eqref{ineq:halfmass} and if $\mu_k \to \mu$ weakly, where each $\mu_k$ also satisfies~\eqref{ineq:halfmass}, then $\xi(\mu_k) \to \xi(\mu)$ as $k \to \infty$.
\end{lemma}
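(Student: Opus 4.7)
The plan is to introduce the center-of-mass map
$$F:\B^{n+1}\to\overline{\B^{n+1}},\qquad F(\xi):=\frac{1}{\mu(\Sn)}\int_{\Sn}\phi_\xi(p)\,d\mu(p),$$
so that~\eqref{mesurenonatomique} becomes $F(\xi)=0$. The image lies in $\overline{\B^{n+1}}$ because $\phi_\xi(p)\in\Sn$ for every $p$ and $\overline{\B^{n+1}}$ is the convex hull of $\Sn$; continuity of $F$ on $\B^{n+1}$ is clear from the explicit formula~\eqref{defiphi_xi}. I would establish existence by a no-retraction argument rooted in a boundary analysis of $F$, uniqueness by a monotonicity/convexity property, and continuous dependence on $\mu$ from uniqueness together with a compactness argument.

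For the boundary analysis, fix $\xi_0\in\Sn$ and approach it radially along $\xi=t\xi_0$ with $t\nearrow 1$. A direct computation from~\eqref{defiphi_xi} gives $\phi_{t\xi_0}(-\xi_0)=-\xi_0$ for every $t\in[0,1)$, while~\eqref{eq:limitxi} yields $\phi_{t\xi_0}(p)\to\xi_0$ for every $p\neq-\xi_0$. Dominated convergence then produces
$$\lim_{t\to 1^{-}}F(t\xi_0)=\Big(1-\tfrac{2\mu(\{-\xi_0\})}{\mu(\Sn)}\Big)\xi_0,$$
which by~\eqref{ineq:halfmass} is a strictly positive multiple of $\xi_0$. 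A parallel analysis along arbitrary paths $\xi\to\xi_0$ shows that $\|F(\xi)\|$ stays uniformly away from $0$ near $\Sn$ and that $F(\xi)/\|F(\xi)\|\to\xi_0$. Existence of a zero of $F$ then follows by contradiction: if $F$ never vanished on $\B^{n+1}$, the map $\xi\mapsto F(\xi)/\|F(\xi)\|$ would extend continuously to $\overline{\B^{n+1}}\to\Sn$ with boundary equal to the identity, giving a retraction of the closed ball onto its boundary, contradicting the no-retraction theorem.

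The uniqueness of $\xi(\mu)$ is where I expect the main difficulty. The standard route identifies $F$, under the Poincar\'e metric of $\B^{n+1}$, as the hyperbolic gradient of a convex function, which condition~\eqref{ineq:halfmass} upgrades to strict convexity (since the measure cannot concentrate too heavily at a single point); strict convexity then forces the critical point to be unique. Alternatively, one may compute $dF$ directly from~\eqref{defiphi_xi}, verify that it is invertible at every interior point, and bootstrap local injectivity into global injectivity by combining with the controlled boundary behavior via a degree argument.

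Continuous dependence of $\xi(\mu)$ then follows from uniqueness by compactness. If $\mu_k\to\mu$ weakly with each $\mu_k$ satisfying~\eqref{ineq:halfmass}, any subsequential limit $\xi^\ast\in\overline{\B^{n+1}}$ of $\xi(\mu_k)$ satisfies $\int\phi_{\xi^\ast}d\mu=0$ when $\xi^\ast\in\B^{n+1}$, by weak convergence together with the uniform convergence of $\phi_{\xi(\mu_k)}\to\phi_{\xi^\ast}$ on $\Sn$. The boundary behavior combined with~\eqref{ineq:halfmass} for $\mu$ rules out $\xi^\ast\in\Sn$, so uniqueness forces $\xi^\ast=\xi(\mu)$ and the whole sequence converges.
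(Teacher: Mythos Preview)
The paper does not prove this lemma at all: it is quoted verbatim as \cite[Corollary~5]{Laugesencenterofmass} and used as a black box. So there is nothing in the paper to compare your argument against.

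That said, your outline follows the standard route and is essentially sound. The existence part via the no-retraction theorem is correct, though the passage from the radial limit computation to the claim that $F(\xi)/\|F(\xi)\|$ extends continuously to the identity on $\Sn$ requires a uniform estimate, not just a pointwise one along each path; this is routine but should be stated. Your uniqueness discussion is only a gesture: you name the hyperbolic-convexity approach (which is indeed how Laugesen proceeds, realizing $F$ as a gradient of a strictly convex function on hyperbolic space) but do not carry it out, and the alternative via invertibility of $dF$ plus a degree argument is vaguer still. The continuity argument is fine in spirit, but ruling out $\xi^\ast\in\Sn$ needs a little care: you must use weak convergence $\mu_k\to\mu$ together with the half-mass condition on $\mu$ (not on $\mu_k$) to control $\mu_k$ near $-\xi^\ast$ uniformly in $k$, since $\phi_{\xi(\mu_k)}$ no longer converges uniformly when the limit hits the boundary.
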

\begin{rem}
    In our applications, the measures $\mu$ will satisfy $\mu(\{y\})=0$ for each point $y$.
\end{rem}
\begin{rem}
    Let $\pi:\Sn\to\R^{n+1}$ be the canonical embedding. That is,
    $$\pi(x)=(X_{e_1}(x),\cdots,X_{e_{n+1}}(x)).$$
    The usual definition of the center of mass of a measure $\mu$ on $\Sn$ is
    $$\frac{1}{\mu(\Sn)}\int_{\Sn}\pi\,d\mu\in\mathbb{B}^{n+1}.$$
    Equation~\eqref{mesurenonatomique} can be written in terms of the push-forward measure $\nu={\phi_{\xi}}_\star\mu$:
    \begin{equation*}
     0=\int_{\Sn}\phi_{\xi}\,d\mu=\int_{\Sn}\pi\,d\nu.
    \end{equation*}
    In other words, the center of mass of the push-forward measure $\nu={\phi_{\xi}}_*\mu$ the origin.
    Some authors also call the renormalization parameter $\xi(\mu)$ the center of mass.
\end{rem}

\subsection{\bf Folding spherical caps}
The next important topological idea is called the folding method and consists of introducing spherical caps to construct a trial function satisfying two orthogonality conditions simultaneously, so that it will be admissible in the characterization~\eqref{caravariation} of $\lambda_2$. This folding method was first introduced by Nadirashvili in \cite{nadirashvili2002} to prove that $$\bla(\mathbb{S}^2,g)\leq 16 \pi.$$
The bound of $16\pi$ is achieved by a degenerating sequence of metrics into two disjoint spheres. In 2009, Girouard, Nadirashvili and Polterovich \cite{GNP} generalized this result to higher dimensional spheres with odd dimensions up to a asymtotically sharp constant. Later in 2014, some progress was made by Petrides \cite{Petrides2014} to deal with the case of even dimensions, until finally Kim \cite{Kim} managed to obtain the optimal result that we already stated in corollary \ref{Kimresultsphere}.

\subsubsection*{Construction of folding maps}
We start by presenting a natural parametrization of the space of all spherical caps $C\subset\Sn$.  Let $(p,t)\in \Sn \times (-1,1)$ and consider
the hemisphere 
$$C_{(p,0)}:=\{x \in \Sn\,:\, \langle x, p\rangle >0 \}$$ 
centered on $p\in\Sn$. The spherical cap $C_{(p,t)}$ centered at $p$ and with radius $t$ is defined as
$$C_{(p,t)} = \phi_{-tp}(C_{(p,0)}).$$
It follows from~\eqref{eq:limitxi} that
$$\lim_{t\to 1}C_{(p,t)}=\Sn\qquad\text{ and }\qquad\lim_{t\to -1}C_{(p,t)}=\{p\}.$$
Thus, the space of spherical caps is naturally identified with $(-1,1)\times\Sn$, and its natural compactification is $\overline{\mathbb{B}}^{n+1}$, where the origin $0$ corresponds to the limit as $t\to 1$.

Given $p\in\Sn$, the reflection $R_p:\Sn\to\Sn$ across the hyperplane perpendicular to $p$ and containing the origin is given by 
$$R_p(x)=x - 2\langle x,p \rangle p.$$
Conjugation with the automorphism defining a spherical cap $C$ allows the definition of reflections $\tau_C:\Sn\to\Sn$ across the boundary of any spherical cap $C=C_{(p,t)}$:
$$\tau_C:= \phi_{-tp} \circ R_p \circ \phi_{tp}.$$
For the spherical cap $C=C_{(p,t)}$ we  can finally define the folding map 
$F_C =F_{(p,t)}: \Sn \to C$ by 
$$
F_C(x)=
\begin{cases}
    x &\text{ if }x \in C, \\
    \tau_C(x) &\text{ otherwise }.
\end{cases}
$$

This construction was introduced in~\cite{GNP}. See also~\cite{Kim,BucurHenrot}

\subsection*{\bf General properties}
The following proposition summarizes some simple but useful properties that follow directly from the definitions provided above. These properties were previously discussed in~\cite[Lemma 5]{Kim} in the context of $\mathbb{S}^n$ with its volume measure $dv_g$, and also in~\cite[Lemma 5.2]{Freitas-Laugesentwoballs} for bounded domains of $\mathbb{R}^n$ endowed with a hyperbolic measure. For our purposes, we state these properties for an arbitrary Borel measure $\mu$ on $\mathbb{S}^n$ that satisfies the Hersch center of mass condition~\eqref{ineq:halfmass}. The proof can be derived in the same manner as in ~\cite[Lemma 5.2]{Kim}.

Let $\xi_C = \xi_{(p, t)}$ denote the center of mass of the push-forward measure ${F_C}_* \mu$, referred to as the renormalization point of the cap $C$.

\begin{prop}\label{propriétés}
     \begin{enumerate}[label=(\roman*)]
        \item The renormalization point $\xi_{(p,0)}$ of the hemisphere cap $C=C_{(p,0)}$ satisfies: 
        $$\xi_{(p,0)}=R_p(\xi_{(-p,0)}).$$
        \item  We also have $$F_{(p,0)}=R_p\circ F_{(-p,0)}.$$
        \item For any $\xi \in \mathbb{B}^{n+1}$,
        $$\phi_{\xi} \circ R_p = R_p \circ \phi_{R_p (\xi)}.$$
   \end{enumerate}
\end{prop}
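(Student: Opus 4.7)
The natural order to tackle these three items is the reverse of their presentation: (iii) is a purely algebraic identity about the Möbius automorphisms $\phi_\xi$, (ii) is an immediate unfolding of definitions, and (i) is the genuine content, obtained by combining the first two with the uniqueness clause in Hersch's Lemma~\ref{lemmeHersch}.

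My plan for (iii) is a direct computation. Starting from the formula~\eqref{defiphi_xi}, I apply $R_p$ to $\phi_{R_p(\xi)}(x)$ and use three facts about the reflection $R_p$: it is linear, it is an isometry (so $|R_p(y)|=|y|$), and it is an involution (so $R_p(R_p(\xi))=\xi$). This gives
\[
R_p\bigl(\phi_{R_p(\xi)}(x)\bigr)=\xi+\frac{1-|\xi|^2}{|x+R_p(\xi)|^2}\bigl(R_p(x)+\xi\bigr),
\]
while on the other side
\[
\phi_\xi(R_p(x))=\xi+\frac{1-|\xi|^2}{|R_p(x)+\xi|^2}\bigl(R_p(x)+\xi\bigr).
\]
The identity $|x+R_p(\xi)|^2=|R_p(x+R_p(\xi))|^2=|R_p(x)+\xi|^2$ closes the argument.

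For (ii), setting $t=0$ in the definition of $\tau_C$ gives $\tau_{C_{(p,0)}}=\phi_0\circ R_p\circ\phi_0=R_p$, since $\phi_0=\mathrm{id}_{\Sn}$; similarly $\tau_{C_{(-p,0)}}=R_{-p}=R_p$. Up to a set of $\mu$-measure zero (the equator $\langle x,p\rangle=0$), we have $C_{(p,0)}=\Sn\setminus C_{(-p,0)}$, so the case analysis in the definition of $F_C$ and the involution identity $R_p\circ R_p=\mathrm{id}$ give $F_{(p,0)}=R_p\circ F_{(-p,0)}$ pointwise.

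Item (i) is where the content lies. By the defining property of the renormalization point, $\int_{\Sn}\phi_{\xi_{(p,0)}}\circ F_{(p,0)}\,d\mu=0$. Substituting (ii) turns this into $\int_{\Sn}\phi_{\xi_{(p,0)}}\circ R_p\circ F_{(-p,0)}\,d\mu=0$, and applying (iii) with $\xi=\xi_{(p,0)}$ gives $\int_{\Sn}R_p\circ\phi_{R_p(\xi_{(p,0)})}\circ F_{(-p,0)}\,d\mu=0$. Pulling the linear isometry $R_p$ outside the integral and using its invertibility yields
\[
\int_{\Sn}\phi_{R_p(\xi_{(p,0)})}\,d(F_{(-p,0)})_\star\mu=0.
\]
The uniqueness assertion in Hersch's Lemma then forces $R_p(\xi_{(p,0)})=\xi_{(-p,0)}$, which is (i) after one more application of $R_p$. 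The only point requiring mild care will be verifying that $(F_{(-p,0)})_\star\mu$ still satisfies the half-mass condition~\eqref{ineq:halfmass}, so that uniqueness applies; this will follow from the remark just after Lemma~\ref{lemmeHersch} under the mild assumption that $\mu$ is non-atomic, which is the relevant regime for the proof of Theorem~\ref{thm:thmprincipal}.
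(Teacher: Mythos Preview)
Your proof is correct and complete. The paper does not actually supply its own argument for this proposition; it simply refers to \cite[Lemma~5]{Kim} and \cite[Lemma~5.2]{Freitas-Laugesentwoballs} and asserts that the proof carries over to an arbitrary Borel measure~$\mu$ satisfying~\eqref{ineq:halfmass}. Your write-up fills in exactly those details, and the strategy---verify the algebraic identity~(iii), read~(ii) off the definitions at $t=0$, then deduce~(i) from the uniqueness clause of Lemma~\ref{lemmeHersch}---is the standard route taken in those references.

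One cosmetic remark: in~(ii) you hedge with ``up to a set of $\mu$-measure zero (the equator)'', but in fact the identity $F_{(p,0)}=R_p\circ F_{(-p,0)}$ holds pointwise even on the equator $\{x:\langle x,p\rangle=0\}$, since $R_p$ fixes that set. Your observation that $(F_{(-p,0)})_\star\mu$ remains non-atomic (because $F_C$ is at most two-to-one) is the right way to justify the appeal to uniqueness in Lemma~\ref{lemmeHersch}.
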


For the sake of completeness, we also present a lemma from
Kim~\cite[Proposition $8$]{Kim}.
 \begin{lemma}\label{condition double}
Let $g$ be a metric on $\Sn\subset\R^{n+1}$ and consider $f_1\in \mathcal{C}^\infty (\Sn)$ an eigenfunction for $\l_1(\Sn,g)$. Then there exists $p\in\Sn$ and $t\in [0,1)$ for which the renormalization point $\xi_C$ of the cap $C=C_{(p,t)}$ satisfies the following:
\begin{enumerate}[label=(\roman*)]
     
 \item $$\int_{\Sn}\phi_{\xi_C} \circ F_C\,dv_g=0,$$
 
 \item $$\int_{\Sn} (\phi_{\xi_C} \circ F_C)  f_1\,dv_g=0.$$
  \end{enumerate}
\end{lemma}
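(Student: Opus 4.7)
Condition (i) is built into the definition of $\xi_C$ through the Hersch center of mass Lemma~\ref{lemmeHersch}, applied to the push-forward measure $(F_C)_*dv_g$. Since $dv_g$ has no atoms and $F_C$ is piecewise smooth, this push-forward is non-atomic and the half-mass condition~\eqref{ineq:halfmass} is satisfied; the point $\xi_C$ is then uniquely defined and depends continuously on the cap $C$. The substance of the lemma is therefore entirely in condition (ii), which I would obtain by a topological degree argument on the parameter space of caps.

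First, I would define a continuous vector field
$$G:\overline{\mathbb{B}}^{n+1}\longrightarrow\mathbb{R}^{n+1},\qquad G(p,t):=\int_{\S^n}(\phi_{\xi_C}\circ F_C)\,f_1\,dv_g,$$
using the parametrization $(p,t)\in\Sn\times(-1,1)$ extended to the compactification $\overline{\mathbb{B}}^{n+1}$ described in the excerpt, with the origin corresponding to $t\to 1$ (the cap exhausts $\Sn$) and the boundary sphere corresponding to $t\to -1$ (the cap collapses to $\{p\}$). Continuity of $G$ on the open ball follows from the continuity of $C\mapsto\xi_C$ and the smoothness of both $F_C$ away from $\partial C$ and $\phi_\xi$ for $\xi\in\mathbb{B}^{n+1}$. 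The goal is to extend $G$ continuously to $\overline{\mathbb{B}}^{n+1}$ and show that $G\restriction_{\partial\overline{\mathbb{B}}^{n+1}}$ has nonzero Brouwer degree as a map to $\mathbb{R}^{n+1}\setminus\{0\}$, which forces $G$ to vanish somewhere in the interior and yields the cap satisfying~(ii).

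The crucial step is to understand the boundary limit as the cap collapses ($t\to -1$). In this regime the folding map $F_C$ crushes essentially all of $\Sn$ into a small neighborhood of $p$, so the push-forward $(F_C)_*dv_g$ concentrates at $p$. To re-center its mass at the origin, the renormalization point $\xi_C$ is forced toward $\partial\mathbb{B}^{n+1}$, and the composition $\phi_{\xi_C}\circ F_C$ must be analyzed by a rescaling (stereographic) argument about $p$. The expected outcome is that $\phi_{\xi_C}\circ F_C$ converges, in a suitable sense, to a conformal diffeomorphism of $\Sn$ fixing an antipodal pair, with the consequence that the limiting $G(p,-1)$ is a nonzero multiple of $p$ (up to a factor depending smoothly on $f_1(p)$ and on the measure). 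This identifies $G\restriction_{\partial}$, up to homotopy, with the identity (or antipodal) map of $\Sn$, hence of nonzero degree. In parallel, at the equatorial slice $t=0$, Proposition~\ref{propriétés}(i)--(iii) combine to give $\phi_{\xi_{(p,0)}}\circ F_{(p,0)}=R_p\circ\phi_{\xi_{(-p,0)}}\circ F_{(-p,0)}$, and hence the symmetry
$$G(p,0)=R_p\,G(-p,0),$$
which can serve as an alternative input (via a Borsuk--Ulam-type perturbation) if the direct boundary computation is uncooperative.

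\textbf{Main obstacle.} The delicate point is the simultaneous degeneration of $\phi_{\xi_C}$ (whose parameter $\xi_C$ leaves $\mathbb{B}^{n+1}$) and of $F_C$ (whose image collapses) as $t\to -1$. To make the extension of $G$ continuous and to identify its homotopy class on $\partial\overline{\mathbb{B}}^{n+1}$, one must perform a careful blow-up near the collapsing point $p$ so that the two degenerations cancel out and yield a well-defined, non-constant limit map. Once the boundary behavior is pinned down and the degree of $G\restriction_{\partial}$ is shown to be $\pm1$, the existence of a zero of $G$ in the interior is automatic, and that zero is precisely the cap demanded by the lemma.
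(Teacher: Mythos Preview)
Your setup and the identification of condition~(i) as automatic are correct, and you correctly recognize the symmetry $G(p,0)=R_p\,G(-p,0)$ coming from Proposition~\ref{propriétés}. The gap is that you treat this symmetry as a fallback and place the weight of the argument on the limit $t\to -1$, which is precisely the regime one should avoid. As the cap collapses, the push-forward $(F_C)_*dv_g$ concentrates to a Dirac mass at $p$, the half-mass hypothesis~\eqref{ineq:halfmass} fails, and $\xi_C$ escapes to $\partial\mathbb{B}^{n+1}$; your ``expected outcome'' that $\phi_{\xi_C}\circ F_C$ converges to a conformal diffeomorphism and that $G(p,-1)$ is a nonzero multiple of $p$ is not justified (and there is no reason the limit should be proportional to $p$, since it involves the arbitrary eigenfunction $f_1$). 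This blow-up analysis is genuinely delicate and you have not carried it out. A secondary issue is that even if it worked, a zero of $G$ in the full ball could occur with $t<0$, whereas the lemma requires $t\in[0,1)$.

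The paper (following Kim, and reproduced in the proof of Lemma~\ref{lemmefcttest}) sidesteps the degeneration entirely by working on the cylinder $\Sn\times[0,1]$ rather than on the full ball. One argues by contradiction: if $\psi(p,t):=\int(\phi_{\xi_C}\circ F_C)f_1\,dv_g$ never vanishes for $t\in[0,1)$, then $H(p,t):=\psi(p,t)/\|\psi(p,t)\|$ is a continuous map $\Sn\times[0,1]\to\Sn$. At $t=1$ the cap is the whole sphere, so $H(\cdot,1)$ is constant and has degree zero. At $t=0$ the identity you already derived, $H(p,0)=R_p\,H(-p,0)$, is exactly the hypothesis of Lemma~\ref{lemmePetrides}, which forces $\deg H(\cdot,0)\neq 0$. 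Homotopy invariance of degree gives the contradiction, and the resulting zero automatically lies in $\Sn\times[0,1)$. In short: promote your ``alternative input'' to the main argument and discard the $t\to -1$ analysis altogether.
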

\begin{rem}
    Observe that $t\geq 0$ in the previous lemma means that the cap $C=C_{(p,t)}$ contains the hemisphere $C_{(p,0)}$.
\end{rem}
Finally, we will need the following lemma using a particular symmetry to obtain nonzero degree of a map, its proof was initially given by Petrides \cite{Petrides2014} by local surgery, and later by Laugesen and Freitas \cite{Freitas-Laugesentwoballs} by global surgery.
\begin{lemma}\label{lemmePetrides}
    Let $n\geq 2$ and let $V : \Sn \to \Sn$ a continuous map such that:
    \begin{equation}
        V(-p)=R_p(V(p)),
    \end{equation}
    for all $p\in \Sn.$
    Then $$\text{deg}(V) \neq 0.$$
\end{lemma}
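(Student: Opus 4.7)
My plan is to argue by contradiction: assume $\deg(V) = 0$ and derive an impossibility from the symmetry $V(-p) = R_p V(p)$.

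The first step is structural. Using $R_p V(p) = V(p) - 2\langle V(p),p\rangle p$, I would decompose
\[
V(p) = h(p)\, p + W(p), \qquad h(p) := \langle V(p), p\rangle, \quad W(p) := V(p) - h(p)p \in T_p\Sn,
\]
and observe that substituting into $V(-p) = V(p) - 2h(p)p$ forces $h(-p) = h(p)$ and $W(-p) = W(p)$, so both are \emph{even} functions on $\Sn$. This decomposition is the structural backbone of the argument.

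The second step handles the generic case by a linear homotopy. Consider $V_s(p) = (V(p) + sp)/|V(p)+sp|$ for $s \in [0,\infty)$, with $V_0 = V$ and $V_s \to \mathrm{id}_{\Sn}$ as $s \to \infty$. Because $R_p$ is a linear isometry with $R_p(p) = -p$, a short calculation yields $V(-p) - sp = R_p(V(p)+sp)$, so each $V_s$ retains the symmetry. The denominator can only vanish when $s = 1$ and $V(p) = -p$. Consequently, if $V(p) \ne -p$ for every $p$, the homotopy is defined throughout and gives $\deg(V) = \deg(\mathrm{id}) = 1 \ne 0$. Symmetrically, using $(V - sp)/|V - sp|$, if $V(p) \ne p$ everywhere then $V$ is deformed to $-\mathrm{id}$ and $\deg(V) = (-1)^{n+1} \ne 0$. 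In either scenario we contradict the assumption.

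The third step deals with the remaining configuration, where $V$ hits both $+p$ at some $p_1$ and $-p$ at some $p_2$. Here I would perform surgery on $V$ to produce a new $R_p$-symmetric map $V'$ of the same degree that avoids, say, the antipodal map, thereby reducing to the previous step. Equivalently, I would try to remove the zeros of $V+\mathrm{id}$ (or $V-\mathrm{id}$) while preserving both the symmetry and the degree.

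The hard part will be this surgery. The constraint $V(-p) = R_pV(p)$ is rigid: modifying $V$ near a point $p_0$ forces a twin modification near $-p_0$ via $R_{p_0}$, so degree contributions from the two surgery sites must cancel in matched pairs. I would follow either the local surgery of Petrides~\cite{Petrides2014}, which replaces $V$ on antipodal pairs of small balls by explicit model maps with controlled symmetry and degree, or the global surgery of Freitas-Laugesen~\cite{Freitas-Laugesentwoballs}, which splits $\Sn$ along a well-chosen equator and rebuilds $V$ by matching the two hemispheres under $R_p$. The hypothesis $n \ge 2$ enters essentially here, giving enough ambient room to perform the symmetric modifications without creating new obstructions.
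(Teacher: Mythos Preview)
The paper does not give its own proof of this lemma; it records the statement and defers entirely to Petrides~\cite{Petrides2014} (local surgery) and Freitas--Laugesen~\cite{Freitas-Laugesentwoballs} (global surgery). There is therefore no in-paper argument to compare against beyond those citations.

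What you have written is correct as far as it goes. The symmetry check for the linear homotopy $p\mapsto (V(p)\pm sp)/|V(p)\pm sp|$ is right, and it does dispose of the two easy cases where $V$ has no antipodal point (respectively no fixed point). Your first-step decomposition is also correct---both $h$ and $W$ come out even---but you never invoke it afterward, so it is not the ``structural backbone'' of anything here and could be omitted without loss.

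The entire content of the lemma, however, lives in your third step, and there you do exactly what the paper does: cite Petrides and Freitas--Laugesen rather than carry out the surgery. The difficulty you name is real---modifying $V$ near an antipodal pair $\{p_0,-p_0\}$ in a way that respects the $R_p$-relation while leaving the global degree unchanged is precisely the nontrivial point---but naming it is not the same as resolving it. So your proposal is not a proof; it is an outline that, like the paper's own treatment, outsources the substantive step to the literature. If a self-contained argument is required, the surgery must actually be written out.
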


\subsection{\bf Proof of the main result}
Let us now proceed to the proof of the theorem \ref{thm:thmprincipal}. 
Consider a conformal immersion $\phi: M \to \Sn$. This induces the Radon measure $\mu:=\phi_*dv_g$ on $\Sn$, which (using Lemma~\ref{lemmeHersch}) we assume satisfies
\begin{gather}\label{supposeHersch}
    \int_{M}\phi\,dv_g=\int_{\S^n}\pi\,d\mu=0\in\R^{n+1}.
\end{gather}
We will construct trial functions for $\lambda_2(M,g)$ of the following form:
$$f_{C,i}:=X_{e_i}\circ \phi_{\xi_C} \circ F_C \circ \phi\,\qquad\text{ for }\qquad i=1,\cdots,n+1.$$ 
Set $f_C=(f_{C,1},\cdots,f_{C,n+1}):M\to\Sn\subset\mathbb{R}^{n+1}$ and
observe that
$$\int_{M}f_C\,dv_g=\int_{\S^n}\phi_{\xi_C}\,d({F_C}_*\mu).$$
Because the measure $\nu:={F_C}_*\mu$ satisfies $\nu(\{y\})=0$ for each $y\in\Sn$,
it follows from Lemma~\ref{lemmeHersch} that there exists
a unique renormalization vector $\xi_C\in\mathbb{B}^{n+1}$ such that 
$$\int_{M}f_{C}\,dv_g=0.$$
In other words, the trial functions $f_{C,i}$ satisfy the first admissibility condition $\int_{M}f_{C,i}\,dv_g=0$ in the variational characterization~\eqref{caravariation} of $\lambda_2$.
In the second step, we will see that there exists a spherical cap $C$ for which the trial functions $f_{C,i}$ also satisfy the second admissibility condition
$$\int_{M}f_{C,i}f_1\,dv_g=0.$$
We record all of this in the next Lemma.
\begin{lemma}\label{lemmefcttest}
Let $(M,g)$ be a Riemannian manifold of dimension $m$, and let $f_1\in \mathcal{C}^\infty (M)$ be an eigenfunction associated to the first eigenvalue $\l_1(M,g)$. For each spherical cap $C\subset\Sn$, there exists a unique $\xi_C\in\mathbb{B}^{n+1}$ such that:
$$\int_{M}\phi_{\xi_C}\circ F_C  \circ \phi\,dv_g=0.$$
Moreover, if $\int_{M}\phi f_1 dv_g\neq 0$ then there exists a cap
$C=C_{(p,t)}$, with $t\in [0,1)$, such that:
$$\int_{M}\phi_{\xi_C}\circ F_C \circ \phi.f_1\, dv_g=0.$$
\end{lemma}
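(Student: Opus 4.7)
My plan is to handle the two assertions separately. For the first, I would apply Hersch's center of mass lemma (Lemma~\ref{lemmeHersch}) to the push-forward measure $\nu := (F_C)_* \mu$, where $\mu := \phi_* dv_g$. The measure $\mu$ has no atoms because a conformal immersion has discrete fibers, and $\nu$ inherits this property since $F_C$ is at most two-to-one; the half-mass condition~\eqref{ineq:halfmass} is therefore automatically satisfied, and Hersch's lemma delivers the unique $\xi_C \in \mathbb{B}^{n+1}$ with the desired vanishing property.

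For the second assertion, I plan to reuse the folding construction from~\cite{nadirashvili2002, Petrides2014, Kim} and run a topological obstruction argument. The caps $C_{(p,t)}$ with $t \in [0,1)$ will be parametrized by $\eta = (1-t)\,p \in \overline{\mathbb{B}}^{n+1}$, so that $\eta = 0$ corresponds to the limit cap $\Sn$ (for which $F_C = \mathrm{id}$) and $\eta = p \in \Sn$ corresponds to the hemisphere $C_{(p,0)}$. I will then study the continuous map
\[
W \colon \overline{\mathbb{B}}^{n+1} \to \R^{n+1}, \qquad W(\eta) := \int_M \phi_{\xi_{C(\eta)}} \circ F_{C(\eta)} \circ \phi \cdot f_1\, dv_g.
\]
The normalization~\eqref{supposeHersch} forces $\xi_{C(0)} = 0$, so $W(0) = \int_M \phi\, f_1\, dv_g \neq 0$ by hypothesis. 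Arguing by contradiction, if $W$ never vanished then $V := W/|W|$ would restrict to a boundary map $\Sn \to \Sn$ extending continuously to the ball, and hence of degree zero. Chaining the three identities of Proposition~\ref{propriétés} gives
\[
\phi_{\xi_{(p,0)}} \circ F_{(p,0)} = R_p \circ \phi_{\xi_{(-p,0)}} \circ F_{(-p,0)},
\]
which after integration against $f_1\, dv_g$ yields $W(p) = R_p(W(-p))$ and hence $V(-p) = R_p(V(p))$ for all $p \in \Sn$. Lemma~\ref{lemmePetrides} then forces $\deg V|_{\Sn} \neq 0$, a contradiction. Since $W(0) \neq 0$, any zero of $W$ produced this way corresponds to a genuine cap $C_{(p,t)}$ with $t \in [0,1)$ satisfying both admissibility conditions.

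The hard part will be verifying the continuity of $W$ on the whole closed ball, especially at the degenerate parameter $\eta = 0$, where the pair $(p, t)$ is no longer determined by $\eta$. One has to check that as $\eta \to 0$ the folding maps $F_{C(\eta)}$ converge pointwise to the identity and the push-forwards $(F_{C(\eta)})_* \mu$ converge weakly to $\mu$ independently of the direction of approach, so that the continuity clause in Hersch's lemma delivers $\xi_{C(\eta)} \to 0$ and bounded convergence then yields $W(\eta) \to W(0)$. The continuity at all other points of $\overline{\mathbb{B}}^{n+1}$ is straightforward since the corresponding caps are nondegenerate.
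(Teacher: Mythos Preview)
Your proposal is correct and follows essentially the same route as the paper: Hersch's lemma applied to $(F_C)_*\mu$ for the first part, and for the second part a contradiction argument based on the degree of the normalized map on hemispheres, using Proposition~\ref{propriétés} to derive the reflection symmetry and Lemma~\ref{lemmePetrides} to force nonzero degree. The only cosmetic difference is that the paper parametrizes caps by the cylinder $\Sn\times[0,1]$ and speaks of a homotopy to the constant map at $t=1$, whereas you collapse that end to a point via $\eta=(1-t)p$ and phrase the same conclusion as ``the boundary map extends over $\overline{\mathbb{B}}^{n+1}$, hence has degree zero''; your explicit remarks on continuity at $\eta=0$ are a welcome addition that the paper leaves implicit.
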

\begin{proof}
The proof of the first part is in the paragraph before the statement.
To prove the existence of the cap $C$, we work by contradiction and suppose all spherical caps $C=C_{(p,t)}$ satisfy
$$\psi(p,t):=\int_{M}\phi_{\xi_C}\circ F_C \circ \phi.f_1 dv_g \neq 0.$$
Next we consider the function $H :\Sn\times [0,1)\to \Sn$ defined by :
$$H(p,t)=\frac{1}{\|\psi(p,t)\|}\psi(p,t).$$

Because the renormalization point $\xi(\mu)$ depends continuously on the measure $\mu$, the function $H$ is continuous.

Moreover, as $t \to 1$, we have $C_{(p,t)} \to \Sn$, thus $H$ extends to the boundary by
$$H(p,1) := \lim_{t\to 1}H(p,t)=\frac{1}{\|\int_{M} \phi.f_1 dv_g\|} \int_M \phi.f_1 dv_g.$$
The trick now consists in looking at what happens at $C_{(-p,0)}$.
By proposition (\ref{propriétés}), since 
$$\xi_{(p,0)}=R_p(\xi_{(-p,0)})\qquad\text{ and }\qquad F_{(p,0)}=R_p\circ F_{(-p,0)},$$ and that for any $\xi \in \B^{n+1}$, $ \phi_{\xi} \circ R_p = R_p \circ \phi_{R_p (\xi)}$, the function $H$ satisfies the following symmetry relation

\begin{equation}\label{symmetriePetrides}
       H(p,0)=R_p\circ \left(H(-p,0)\right).
\end{equation}
Indeed, by the linearity of $R_p$ and the properties that we have just mentioned,
\begin{align*}
\int_{M} \phi_{\xi_{(-p,0)}} \circ F_{(-p,0)} \circ \phi f_1 \, dv_g 
&= \int_{M}  \phi_{\xi_{(-p,0)}} \circ R_p \circ F_{(p,0)} \circ \phi f_1 \, dv_g \\
&= \int_{M} R_p \circ \phi_{R_p(\xi_{(-p,0)})} \circ F_{(p,0)} \circ \phi f_1 \, dv_g \\
&= R_p\left(\int_{M} \phi_{\xi_{(p,0)}} \circ F_{(p,0)} \circ \phi f_1 \, dv_g\right).
\end{align*}
Inequality \eqref{symmetriePetrides} follows by observing that $R_p$ is an isometry of $\R^{n+1}$, ie $\|R_p(x)\|=\|x \|$ for all $x\in \R^{n+1}$.

Once this symmetry identity is established, the reasoning proceeds by observing that $H$ represents a homotopy of spheres between $H(p, 1)$, which is independent of $p$, thereby having a homotopy degree of zero, and $H(p, 0)$, which is a function satisfying the given symmetry \eqref{symmetriePetrides}. The main contribution of Petrides’s paper \cite{Petrides2014} was to prove that this symmetry has a non-zero homotopy degree, as recalled in Lemma \ref{lemmePetrides}. This leads to a contradiction.
\end{proof}
Using Lemma \ref{lemmefcttest}, we can now complete the proof of Theorem \ref{thm:thmprincipal}.

 \begin{proof}[Proof of Theorem \ref{thm:thmprincipal}]
 Let $\phi : M \rightarrow \Sn$ be a conformal immersion satisfying~\eqref{supposeHersch}, and let  $(e_i)_i$ be an orthonormal basis of $\mathbb{R}^{n+1}$.

The proof is simplest when $\int_{M}\phi f_1 dv_g=0$, since then the functions $X_{e_i}\circ\phi$ with $i=1,2,\cdots,m+1$ can be used as trial functions for $\lambda_2$, and it follows from~\eqref{caravariation}  that
 $$\lambda_2(M,g)\int_{M}X_{e_i}^2\circ\phi\,dv_g\leq \int_M|\nabla (X_{e_i}\circ\phi)|^2\,dv_g.$$
 Summing over $i$ and using $\sum_{i=1}^{n+1} X_{e_i}^2\equiv 1$, it follows from the H\"older inequality that
 $$\lambda_2(M,g)\text{vol}(M,g)
 \leq \left( \int_M \left( \sum_{i=1}^{n+1} |\nabla X_{e_i} \circ \phi|^2 \right)^{{m/2}} \, dv_g \right)^{2/m} \text{vol}(M, g)^{1-2/m}.$$
 A standard computation shows that
$$\phi^*g_0=\frac{1}{m}\left(\sum_{i=1}^{n+1} |\nabla X_{e_i}\circ \phi |^2\right)\, g,$$
from which it follows that
$$dv_{\phi^*g_0}=\frac{1}{m^{m/2}}\left(\sum_{i=1}^{n+1} |\nabla X_{e_i}\circ \phi |^2\right)^{m/2}\,dv_g$$
and
so that
$$\int_M \left( \sum_{i=1}^{n+1} |\nabla X_{e_i} \circ \phi|^2 \right)^{{m/2}} \, dv_g=m^{m/2}\int_{M}dv_{\phi^*g_0}=m^{m/2}\text{vol}(\phi(M)).$$
It follows by substitution that
\begin{align*}
    \lambda_2(M,g)\text{vol}(M,g)^{2/m}
 &\leq m\text{vol}(\phi(M))^{2/m} \\
 &<2^{2/m}m\text{vol}(\phi(M))^{2/m}
 \leq 2^{2/m}m V_c(n,\phi)^{2/m}.
\end{align*}

In the situation when $\int_{M}\phi f_1 dv_g\neq 0$, the proof is more complicated. It requires the use of the folding mechanism.
 By the standard variational characterization (\ref{caravariation}) of $\lambda_2(M,g)$, lemma~\ref{lemmefcttest} shows the existence of a pair $(C,\xi_C)$ for which the folded function $f_{C}= \phi_{\xi_C}\circ F_c  \circ \phi$ is admissible for $\l_2(M,g)$.
 
 For all $i=1,\cdots, n+1$, we have
 $$\lambda_2(M,g) \int_{M}X^2_{e_i} \circ \phi_{\xi_C}\circ F_C  \circ \phi \,dv_g \leq\int_{M}|\nabla X_{e_i} \circ \phi_{\xi_C}\circ F_C\circ \phi |^2  \, dv_g .$$
 In the same fashion as in the previous case, by summing over the coordinate functions, we obtain 

  $$\lambda_2(M,g) \text{vol}(M,g) \leq\int_{M}\sum_{i=1}^{n+1}|\nabla X_{e_i} \circ \phi_{\xi_C}\circ F_C\circ \phi |^2 \,dv_g .$$
Next we apply  Hölder's inequality to get
\begin{align}\label{ineg après Holder}
    \lambda_2(M, g) \, \text{vol}(M, g) &\leq \left( \int_M \left( \sum_{i=1}^{n+1} |\nabla X_{e_i} \circ \phi_{\xi_C} \circ F_C \circ \phi|^2 \right)^{{m/2}} \, dv_g \right)^{2/m} \text{vol}(M, g)^{1-2/m}.
    \end{align}
By splitting the integral on the right-hand side over \( \phi^{-1}(C) \) and its complement in \( M \), and using the definition of the fold map \( F_C \), we find
\begin{align}\label{decom sur C}
   \int_{M}  \left(  \sum_{i=1}^{n+1} |\nabla X_{e_i} \circ \phi_{\xi_C} \circ F_C \circ \phi|^2 \right)^{{m/2}} \, dv_g &= 2 \int_{\phi^{-1}(C)} \left( \sum_{i=1}^{n+1} |\nabla X_{e_i} \circ \phi_{\xi_C} \circ \phi|^2 \right)^{{m/2}} \, dv_g \\\nonumber
   & < 2 \int_{M} \left( \sum_{i=1}^{n+1} |\nabla X_{e_i} \circ \phi_{\xi_C} \circ \phi|^2 \right)^{{m/2}} \, dv_g.
\end{align}
We then use the expression of the pull back metric $(\phi_{\xi_C} \circ  \phi)^*g_0$ that is given by 
  $$(\phi_{\xi_C} \circ  \phi)^*g_0= \frac{1}{m}\sum_{i=1}^{n+1} |\nabla X_{e_i} \circ \phi_{\xi_C} \circ  \phi |^2\, g$$ 
to deduce that its volume can be expressed as
\begin{equation}\label{express densité pull back}
\text{vol}((\phi_{\xi_C} \circ  \phi)^*g_0) =\frac{1}{m^{m/2}} \int_{M} \left( \sum_{i=1}^{n+1} |\nabla X_{e_i} \circ \phi_{\xi_C} \circ \phi|^2 \right)^{{m/2}} \, dv_g.
\end{equation}
Substituting equation \eqref{express densité pull back} and inequality \eqref{decom sur C} into inequality \eqref{ineg après Holder}, we obtain
\begin{align*}
   \bla(M,g)  &< 2^{2/m} m \, \text{vol}((\phi_{\xi_C} \circ \phi)^*g_0)^{{2/m}} \\ \notag
    &\leq 2^{{2/m}} m \, V_c(n, \phi)^{{2/m}} .
\end{align*} 
In all cases we have proven that
\begin{align}
   \bla(M,g)  <  2^{{2/m}} m \, V_c(n, \phi)^{{2/m}}.
\end{align} 
The proof of Theorem \ref{thm:thmprincipal} is completed by taking the infimum over the set of conformal immersions of $M$ into $\Sn$,
$$\bla(M,g)< 2^{2/m}mV_c(n,M,C)^{2/m}.$$

 \end{proof}

\bibliographystyle{plain}
\bibliography{biblio}
\bigskip
\end{document}